\providecommand{\U}[1]{\protect\rule{.1in}{.1in}}
\newtheorem{theorem}{Theorem}
\newtheorem{lemma}[theorem]{Lemma}
\newtheorem{proposition}[theorem]{Proposition}
\newenvironment{proof}[1][Proof]{\noindent\textbf{#1.} }{\ \rule{0.5em}{0.5em}}
\newcommand{\ab}{\partial_{\infty}}
\newcommand{\Hi}{\mathbb{H}}
\begin{document}

\title{On the asymptotic Plateau problem for CMC hypersurfaces in hyperbolic space}
\author{Jaime Ripoll, Miriam Telichevesky}
\maketitle

\begin{abstract}
Let $\mathbb{R}_{+}^{n+1}$ \ be the half-space model of the hyperbolic space
$\mathbb{H}^{n+1}.$ It is proved that if $\Gamma\subset\left\{  x_{n+1}%
=0\right\}  \subset\partial_{\infty}\mathbb{H}^{n+1}$ is a bounded $C^{0}$
Euclidean graph over $\left\{  x_{1}=0,\text{ }x_{n+1}=0\right\}  $ then,
given $\left\vert H\right\vert <1,$ there is a complete, properly embedded,
CMC $H$ hypersurface $\Sigma$ of $\mathbb{H}^{n+1}$ such that $\partial_{\infty
}\Sigma=\Gamma\cup\left\{  x_{n+1}=+\infty\right\}  .$ This result can be seen as
a limit case of the existence theorem proved by B. Guan and J. Spruck in
\cite{GS} on CMC $\left\vert H\right\vert <1$ radial graphs with prescribed
$C^{0}$ asymptotic boundary data.
\end{abstract}

\section{Introduction}
The following result was proved by B. Guan and J. Spruck in \cite{GS} (Theorem 4.8): 
\begin{theorem}
  \label{gs}
  Suppose $\Gamma$ is the boundary of a star-shaped $C^{0}$ domain in $\mathbb{R}^n$ and let $|H|<1$. Then there exists a unique hypersurface $\Sigma$ of constant mean curvature $H$ in $\Hi^{n+1}$ with asymptotic boundary $\Gamma$. Moreover, $\Sigma$ may be represented as the radial graph over the upper hemisphere $\mathbb{S}^n_+\subset\mathbb{R}^{n+1}$ of a function in $C^{\infty}(\mathbb{S}^n_+)\cap C^{0}(\overline{\mathbb{S}^n_+})$.
\end{theorem}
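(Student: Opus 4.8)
The plan is to solve this asymptotic Plateau problem by converting the geometric condition into a quasilinear elliptic boundary value problem for the radial graph function and then running the continuity method, with geometric a priori estimates supplying compactness. Working in the ball model of $\Hi^{n+1}$, so that $\partial_\infty\Hi^{n+1}=\mathbb{S}^n$ and geodesic rays from the origin are Euclidean radii, I would seek $\Sigma$ as the radial graph over $\mathbb{S}^n_+$ of an unknown function $v$, where $v(\omega)$ records the position along the ray in direction $\omega\in\mathbb{S}^n_+$. Substituting this ansatz into the condition $H_\Sigma\equiv H$ produces a second-order quasilinear elliptic equation $Q[v]=0$ on $\mathbb{S}^n_+$, and the asymptotic requirement $\partial_\infty\Sigma=\Gamma$ becomes a Dirichlet-type condition for $v$ along the equator $\partial\mathbb{S}^n_+=\mathbb{S}^{n-1}$, a condition at which the equation degenerates because $\Sigma$ becomes asymptotically vertical on approaching infinity. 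The star-shaped hypothesis on the domain bounded by $\Gamma$ is exactly what legitimizes this reduction, since it guarantees that the sought hypersurface is transverse to the radial rays and hence genuinely a radial graph.

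Since $\Gamma$ is only $C^0$, I would first treat smooth, strictly star-shaped boundaries $\Gamma_k$ approximating $\Gamma$, solve $Q[v]=0$ for each, obtaining smooth hypersurfaces $\Sigma_k$, and then pass to a limit. For smooth data the existence of $\Sigma_k$ I would establish by the continuity method: embed $Q[v]=0$ in a one-parameter family joining it to a problem with an explicit solution (for instance deforming $H$ to $0$, or deforming $\Gamma_k$ to a round sphere, where the solution is an explicit umbilical hypersurface). Openness of the set of solvable parameters follows from the implicit function theorem once the linearization of $Q$, namely the Jacobi operator of $\Sigma$, is shown to be invertible between suitable weighted Hölder spaces; this invertibility I expect to obtain from the maximum principle, which forces uniqueness for the linearized problem. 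Closedness is the analytic heart of the argument and reduces to uniform a priori estimates: a $C^0$ bound, a gradient bound, and, via Schauder theory applied once the equation is uniformly elliptic, a $C^{2,\alpha}$ bound.

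The a priori estimates are where the hypothesis $|H|<1$ enters decisively, and I expect the gradient estimates to be the main obstacle. The $C^0$ estimate I would obtain by comparison: for $|H|<1$ the umbilical hypersurfaces of mean curvature $H$ (the equidistant hypersurfaces and geodesic spheres) actually reach $\partial_\infty\Hi^{n+1}$, so suitable members of this family serve as upper and lower barriers that trap each $\Sigma_k$ between hypersurfaces sharing its asymptotic boundary. This mechanism fails for $|H|\ge 1$, where geodesic spheres no longer reach infinity, which is precisely why the threshold $|H|<1$ is sharp. The boundary gradient estimate is the delicate point, since the equation degenerates along $\mathbb{S}^{n-1}$; here I would construct local barriers again from the umbilical family, exploiting both $|H|<1$ and the strict star-shapedness of $\Gamma_k$ to control how $\Sigma_k$ approaches the vertical. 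A global gradient estimate would then follow from the maximum principle applied to an auxiliary quantity built from $v$ and its gradient, the standard device being to bound the angle between the unit normal and the radial direction (equivalently the support function).

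With these bounds in hand, interior Schauder estimates yield subconvergence of $\Sigma_k$ to a smooth CMC $H$ hypersurface $\Sigma$ on compact subsets of $\Hi^{n+1}$, while the barriers control the behaviour near infinity and force $\partial_\infty\Sigma=\Gamma$ together with the asserted regularity $v\in C^\infty(\mathbb{S}^n_+)\cap C^0(\overline{\mathbb{S}^n_+})$; properness and embeddedness come from the radial graph structure. Finally, uniqueness I would prove by comparison: two CMC $H$ hypersurfaces sharing the asymptotic boundary $\Gamma$ are, by star-shapedness, both radial graphs, so the difference of their graph functions satisfies an elliptic equation with vanishing boundary trace, and the maximum principle forces them to coincide; alternatively one slides the umbilical family to a first point of contact and invokes the maximum principle for the mean curvature operator.
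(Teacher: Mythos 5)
Your outline is essentially the original Guan--Spruck proof, and it is a viable route; but it is genuinely different from the one this paper takes. The paper first recasts Theorem \ref{gs} intrinsically (Theorem \ref{hyp}): star-shapedness becomes the condition that $\Gamma$ meets each arc of circle joining two points $p_{1},p_{2}\in\partial_{\infty}\Hi^{n+1}$ exactly once, and the radial graph structure becomes the statement that $\Sigma$ is the Killing graph, over a totally geodesic $\Hi^{n}$ orthogonal to the geodesic $\gamma$ joining $p_{1}$ to $p_{2}$, of a solution of the divergence-form equation \eqref{pde} for the hyperbolic Killing field $X$ whose orbits are hypercycles of $\gamma$ (in the half-space model with $p_{1}=0$, these orbits are exactly the Euclidean rays from the origin, so the two notions of graph coincide). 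Existence is then obtained by Perron's method (Theorem \ref{perron}) rather than by the continuity method: the totally geodesic $\Hi^{n}$ gives the subsolution $\sigma\equiv 0$, an equidistant hypersphere of CMC $|H|$ over a sphere enclosing $\Gamma$ gives the supersolution, CMC $H$ hyperspheres give the upper barriers at infinity, and a stacking of totally geodesic hyperspheres gives the lower barriers. The compactness your scheme extracts from global a priori estimates is supplied instead by the interior gradient estimate of Theorem \ref{dlr} together with local solvability (Theorem \ref{dhl}), so no boundary gradient estimate, no approximation of the $C^{0}$ data by smooth hypersurfaces, and no analysis of the degeneration at the equator are needed. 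What your approach buys is a self-contained PDE argument with quantitative boundary behaviour; what the paper's approach buys is brevity and, crucially, portability to the parabolic (tangent-spheres) case of Theorem \ref{par}, where a continuity/exhaustion scheme is problematic because $H$-mean convex Killing cylinders over an exhaustion may not exist.

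Two small cautions on your sketch. First, in the ball model the "radial graph over $\mathbb{S}^{n}_{+}$" does not parse as you describe it; Guan and Spruck work in the half-space model, where the radial directions are Euclidean rays from a boundary point (equivalently, orbits of a hyperbolic isometry group), and this is what makes the sliding arguments by dilations legitimate. Second, in the uniqueness step the claim that star-shapedness of $\Gamma$ forces an arbitrary CMC $H$ competitor with the same asymptotic boundary to be a radial graph is not automatic; the honest argument is your alternative one, sliding by dilations centered at $p_{1}$ (which are isometries) to a first point of contact and invoking the tangency principle, with some care to rule out contact occurring only at infinity.
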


In the previous statement, the authors have used the half-space model of $\Hi^{n+1}$, namely, $$\Hi^{n+1}=\{(x,x_{n+1})\,|\, x\in \mathbb{R}^n, x_{n+1}>0\},$$ equipped with the metric $ds^2 = ds_E^2/x_{n+1}^2$, where $dS_E$ stands for the Euclidean metric in $\mathbb{R}^{n+1}$. In this model, $\{x_{n+1}=0\}$ is identified with the asymptotic boundary $\ab \Hi^{n+1}$ of $\Hi^{n+1}$, and hence their result guarantees the existence and uniqueness of CMC hypersurfaces with some prescribed asymptotic boundary data. 

\medskip

Before stating our main result, it is useful to write Theorem \ref{gs} in intrinsic terms. Using the conformal structure of hyperbolic spaces, arcs of circles and $n$-spheres on its asymptotic boundary do not depend on the chosen model (see first paragraph of Section \ref{hy}). The starshapedness property of $\Gamma$ is, under this approach, equivalent to the existence of two distinct points $p_{1},p_{2}\in\partial_{\infty}\mathbb{H}^{n+1},$ not belonging to $\Gamma,$ such that any arc of circle having $p_{1}$ and $p_{2}$ as ending points intersects $\Gamma$ at one and only one point.

In the present paper we study the degenerated case where $p:=p_{1}=p_{2}$. In this case, we assume that $\Gamma$ is contained in a pinched annulus bounded by two spheres that are tangent in $p$. Precisely: there are $(n-1)-$spheres $E_{1}$ and $E_{2}$ of $\partial_{\infty}\mathbb{H}^{n+1}$ which are tangent to $p\in E_{1}\cap E_{2}$ and $\Gamma\subset U_{1}\cap U_{2}$ where $U_{i}\subset\partial_{\infty}\mathbb{H}^{n+1}$ is the closure of the connected component of $\partial_{\infty}\mathbb{H}^{n+1}\backslash E_{i}$ that contains $E_{j},$ $i\neq j$ (note that this condition is the limit of the case in which $\Gamma$ is contained in an annulus of $\partial_{\infty}\mathbb{H}^{n+1}$ bounded by two spheres, condition which is trivially satisfied in Theorem \ref{gs} from the assumption that $p_{1} ,p_{2}\notin\Gamma$ and that $\Gamma$ is compact).  Moreover, we require that any circle passing through $p$ orthogonal to $E_{1}$ intersects $\Gamma$ at one and only one point (note also that this condition is the limit of the starshaped condition) and that $\Gamma$ is a compact topological hypersurface of $\ab \mathbb{H}^{n+1}$. Under these hypotheses, we prove that given $0\le |H|<1$, there exist a smooth CMC $H$ hypersurface $\Sigma$ such that $\ab \Sigma = \Gamma$. In the half-space model $\mathbb{R}^{n+1}_+$, if $p=\{x_{n+1}=\infty\}$, then $\Sigma$ is a horizontal graph over a totally geodesic hyperplane $\Hi^n$ containing $p$. In this case, $\Gamma$ is the horizontal graph of a bounded (in the Euclidean sense) continuous function defined on $\ab \Hi^n$.

\medskip

Our proof consists on showing the existence and uniqueness of a solution of the asymptotic Dirichlet problem defined on a totally geodesic hypersurface $\Hi^n$ of $\Hi^{n+1}$:

\begin{equation}\label{dp}\left\{ \begin{array}{l}Q_{H}\left(u\right):=\text{div}\left(\dfrac{\nabla u}{\sqrt{\gamma+|\nabla u|^{2}}}\right)-\dfrac{\gamma}{\sqrt{\gamma+|\nabla u|^{2}}}\left\langle \nabla u,\bar{\nabla}_{Y}Y\right\rangle =nH \text{ in } \Hi^n,\\ u|_{\ab \Hi^n} = \phi, \end{array}\right. \end{equation} which corresponds of finding a function $u$ such that its {\em parabolic} Killing graph is a CMC $H$ hypersurface with $\Gamma$ as asymptotic boundary. Here $Y$ is a parabolic Killing field in $\Hi^{n+1}$ orthogonal to $\Hi^n$, the function $\phi\in C^0(\ab \Hi^{n})$ has $\Gamma$ as its Killing graph and $\gamma=\langle Y,Y\rangle^{-1}$ (see Section \ref{killing} for the details). Furthermore, $\text{div}$ and $\nabla$ correspond to the divergent and gradient in $\Hi^n$ and $\overline{\nabla}$ is the Riemannian connection in $\Hi^{n+1}$.

\medskip

At this point it is important to remark that the radial property of $\Sigma$ in Theorem \ref{gs} is equivalent to $\Sigma$ being the \emph{hyperbolic} Killing graph of a function defined in a totally geodesic hypersurface of $\mathbb{H}^{n+1}$ and $\Gamma$ is the radial graph of some function $\phi\in C^0(\ab \Hi^n)$. Hyperbolic and parabolic Killing fields share nice properties that allow us to reprove Guan-Spruck's result in terms of Killing graphs in the same way that we prove our main result, see Section \ref{gsrevisited}.

\medskip 

We would like to remark that a natural approach to solve Dirichlet problems on unbounded domains, the whole space $\mathbb{H}^{n}$ in this case, is by taking the limit of a family of solutions obtained by solving the Dirichlet problem in an exhaustion of $\mathbb{H}^{n}$ by compact $C^{2,\alpha}$ domains $\Omega_{k},$ with boundary data being the restriction to $\partial\Omega_{k}$ of a $C^{2,\alpha}$ function of $\mathbb{H}^{n}$ that extends continuously the given asymptotic boundary data. However, to guarantee that the Dirichlet problem in each $\Omega_{k}$ is solvable the Killing cylinder 
over $\partial\Omega_{k}$ has to be $H-$mean convex. But it seems strongly to us that the existence of an exhaustion of $\mathbb{H}^{n}$ by domains $\Omega_{k}$ with $H-$mean convex Killing cylinders $C\left(  \partial \Omega_{k}\right)  $ may not exist in the case of {\em parabolic} Killing fields if $0<H<1$. To overcome this difficulty we change the approach and, instead of the exhaustion technique, we use Perron's method for the CMC equation of Killing graphs (Theorem \ref{perron}). The applicability of Perron's method in our situation is possible thanks to the recent results obtained by M. Dajczer, J. H. de Lira an the first author (Theorems 1 and 2 of \cite{DLR}. See also Theorems 1 and 2 stated below). With Perron's method, moreover, we obtain a simple proof of our main result that can be easily adapted to reprove Theorem \ref{gs}.

Finally, we point out that our proof is based on a geometric construction, which rests highly on the geometric structure of the hyperbolic space itself. This is specially illustrated on Proposition \ref{lbarrier}.

\section{Perron's method for the mean curvature equation of Killing graphs}\label{killing}

\qquad Let $N^{n+1}$ denote a $(n+1)$-dimensional Riemannian manifold. Assume that $N$ admits a Killing vector field $Z$ with no singularities, whose orthogonal distribution is integrable and whose integral lines are complete. Choose an integral hypersurface $M^{n}$ of the orthogonal distribution. It is easy to see that $M$ is a totally geodesic submanifold of $N$. Denote by  $\Psi_{Z}\colon\mathbb{R}\times N\rightarrow N$ the flow of $Z$. Notice that $\gamma=1/\left\langle Z,Z\right\rangle \>$can be seen as a function in $M$ since $Z\gamma=0$ by the Killing equation. Moreover, the solid cylinder $\Psi(\mathbb{R}\times M)$ with the induced metric has a warped product Riemannian structure $M\times_{\rho}\mathbb{R}$ where $\rho=1/\sqrt{\gamma}$. 

Given a function $u$ on an open subset $\Omega$ of $M$ the associated \emph{Killing graph} is the hypersurface $$\mbox{Gr}_{Z}(u)=\{\Psi(u(x),x) \,\vert\,x\in\Omega\}.$$ It is shown in \cite{DHL} that $\mbox{Gr}(u)$ has CMC $H$ if and only if $u\in C^{2}(\Omega)$ satisfies
\begin{equation}
Q_{H}\left(  u\right)  =\text{div}\Bigg(\frac{\nabla u}{\sqrt{\gamma+|\nabla u|^{2}}}\Bigg)-\frac{\gamma}{\sqrt{\gamma+|\nabla u|^{2}}}\left\langle \nabla u,\bar{\nabla}_{Z}Z\right\rangle =nH \label{pde}
\end{equation}
where 
$H$ is computed with respect to the orientation of $\mbox{Gr}_{Z}(u)$ given by the unit normal vector $\eta$ such that $\left\langle \eta,Z\right\rangle \leq0$. Also $\bar{\nabla}$ denotes the Riemannian connection of $N.$

\vspace{1ex}

Given $o\in\Omega$ let $r>0$ be such that $r<i(o)$ where $i(o)$ is the injectivity radius of $M$ at $o$. We denote by $B_{r}(o)$ the geodesic ball centered at $o$ and radius $r$ which closure is contained in $\Omega$. Theorems \ref{dlr} and \ref{dhl} below are proved in \cite{DLR}: 

\begin{theorem}\label{dlr}
{\hspace*{-1ex}}\textbf{. }\label{korevaar} Let $u\in C^{3}(B_{r}(o))$ be a
solution of the mean curvature equation (\ref{pde}). Then, there exists a
constant $L=L(u(o),r,\gamma,H)$ such that $|\nabla u(o)|\leq L$.
\end{theorem}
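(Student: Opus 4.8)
The statement is an interior gradient estimate, and the plan is to prove it by a geometric maximum principle on the graph $\Sigma=\mathrm{Gr}_{Z}(u)$ over $B_{r}(o)$, in the spirit of Korevaar. The first step is to recast $|\nabla u(o)|$ geometrically. With $\eta$ normalized by $\langle\eta,Z\rangle\le 0$, a direct computation gives $\langle\eta,Z\rangle=-1/W$ with $W:=\sqrt{\gamma+|\nabla u|^{2}}$, so the support (angle) function $\nu:=-\langle\eta,Z\rangle=1/W$ is positive on $\Sigma$. Since $|\nabla u|^{2}=W^{2}-\gamma$ and $\gamma$ is controlled on $B_{r}(o)$, an upper bound for $|\nabla u(o)|$ is equivalent to a positive lower bound for $\nu(o)$, and the whole problem reduces to estimating $\nu$ from below at the centre.

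The mechanism rests on $Z$ being Killing: the flow $\Psi_{Z}$ is an ambient isometry, hence preserves the constant mean curvature, so the normal part $\langle\eta,Z\rangle$ lies in the kernel of the Jacobi operator of the CMC hypersurface $\Sigma$. Concretely,
\[
\Delta_{\Sigma}\nu+\bigl(|A|^{2}+\overline{\mathrm{Ric}}(\eta,\eta)\bigr)\nu=0,\qquad\text{equivalently}\qquad \Delta_{\Sigma}\log\nu=-\bigl(|A|^{2}+\overline{\mathrm{Ric}}(\eta,\eta)\bigr)-|\nabla_{\Sigma}\log\nu|^{2},
\]
where $A$ is the second fundamental form and $\overline{\mathrm{Ric}}$ the Ricci tensor of $N$. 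I would couple this with the equation for the height $\tau:=t|_{\Sigma}$, where $t$ denotes the $Z$-flow parameter: using that $Z$ is Killing one gets $\bar{\nabla}t=\gamma Z$, $\bar{\Delta}t=0$ and $\langle\bar{\nabla}_{\eta}Z,\eta\rangle=0$, whence $\Delta_{\Sigma}\tau$ equals a multiple of $H\gamma\nu$ up to first-order terms governed by $\bar{\nabla}_{Z}Z$, together with the identity $|\nabla_{\Sigma}\tau|^{2}=\gamma|\nabla u|^{2}/W^{2}$. The estimate is then run on Korevaar's test function
\[
\Phi=\zeta\,W\,e^{\lambda\tau}=\zeta\,\nu^{-1}e^{\lambda\tau},
\]
with $\zeta$ a cutoff equal to $1$ over a smaller ball and vanishing over $\partial B_{r}(o)$, and $\lambda$ a large constant. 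At an interior maximum $q$ of $\Phi$ one has $\nabla_{\Sigma}\log\Phi=0$ and $\Delta_{\Sigma}\log\Phi\le 0$; substituting the two equations above and using the first-order condition to trade $\nabla_{\Sigma}\log\nu$ for $\nabla_{\Sigma}\zeta$ and $\nabla_{\Sigma}\tau$, one reaches a differential inequality at $q$. Choosing $\lambda$ large in terms of $H$ and of the bounds for $\overline{\mathrm{Ric}}$, $\gamma$ and $\bar{\nabla}_{Z}Z$ over $B_{r}(o)$ should force $W(q)$ to be bounded; since $\zeta\equiv 1$ and $e^{\lambda\tau}$ is controlled over the centre (after normalizing $u(o)=0$ using the invariance of (\ref{pde}) under $u\mapsto u+\text{const}$), this yields $W(o)\le L$ with $L=L(u(o),r,\gamma,H)$.

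The genuine obstacle is the last computation: choosing the \emph{right} auxiliary function and checking that every term produced by the ambient geometry can be absorbed. Two features make this delicate. First, $N$ is negatively curved, so $\overline{\mathrm{Ric}}(\eta,\eta)<0$ enters the Jacobi equation with an unfavourable sign and the $|A|^{2}$ contribution must be balanced against the good term $-|\nabla_{\Sigma}\log\nu|^{2}$ and the exponential factor. Second, the relevant Killing fields are parabolic (and hyperbolic in Section \ref{gsrevisited}), so $\langle Z,Z\rangle$, and hence $\gamma$, is non-constant and $\bar{\nabla}_{Z}Z\neq 0$; this creates first-order terms in both $\Delta_{\Sigma}\nu$ and $\Delta_{\Sigma}\tau$ that are absent in the product model $M\times\mathbb{R}$ and that must be bounded on $B_{r}(o)$ — precisely the reason $L$ is allowed to depend on $\gamma$ (equivalently on $\bar{\nabla}_{Z}Z=-\tfrac{1}{2}\bar{\nabla}\langle Z,Z\rangle$). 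Once the algebra is arranged so that the $|A|^{2}$ and cross terms are dominated by the terms carrying $\lambda$, the maximum principle closes the estimate.
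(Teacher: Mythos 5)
First, note that the paper itself contains no proof of Theorem \ref{dlr}: it is quoted from \cite{DLR} (``Theorems \ref{dlr} and \ref{dhl} below are proved in \cite{DLR}''), so there is no in-paper argument to compare yours against line by line. Your strategy --- the Korevaar maximum-principle scheme run on the support function $\nu=-\langle\eta,Z\rangle=1/W$, the Jacobi equation $\Delta_{\Sigma}\nu+(|A|^{2}+\overline{\mathrm{Ric}}(\eta,\eta))\nu=0$ furnished by the Killing field, and the height function $\tau$ --- is exactly the scheme of the cited proof (the theorem is even labelled ``korevaar'' in the source), and the preliminary identities you record ($\langle\eta,Z\rangle=-1/W$, $\bar{\nabla}t=\gamma Z$, $\bar{\Delta}t=0$, $|\nabla_{\Sigma}\tau|^{2}=\gamma|\nabla u|^{2}/W^{2}$) are all correct.

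Nevertheless the proposal is not yet a proof, for two concrete reasons. First, the entire content of the theorem is the absorption computation at the maximum point of $\Phi$, and you explicitly leave it undone (``should force $W(q)$ to be bounded'', ``the genuine obstacle is the last computation''). In the application $\overline{\mathrm{Ric}}(\eta,\eta)=-n$, so the Jacobi term enters with the unfavourable sign, and one must actually verify that $|A|^{2}+|\nabla_{\Sigma}\log W|^{2}$ together with the terms carrying $\lambda$ dominates both it and the first-order terms generated by $\bar{\nabla}_{Z}Z$ and by the cutoff; asserting that a suitable $\lambda$ ``should'' exist is precisely the step that constitutes the theorem. Second, the test function $\Phi=\zeta\,W\,e^{\lambda\tau}$ with $\zeta$ a pure distance cutoff cannot deliver the stated dependence $L=L(u(o),r,\gamma,H)$: the comparison $\Phi(o)\le\Phi(q)$ bounds $W(o)$ by a quantity involving $e^{\lambda(\tau(q)-u(o))}$, hence by the oscillation $\sup_{B_{r}}u-u(o)$, which is not among the allowed parameters. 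Korevaar's actual device --- and the one needed here --- is to build the height into the cutoff, e.g. $\zeta=g\bigl((1-d^{2}/r^{2}-(u-u(o))/K)^{+}\bigr)$, so that the test function is supported in $\{u\le u(o)+K\}$ and only $u(o)$ survives in the constant. Without that modification (or an a priori bound on $\sup_{B_{r}}|u|$ in terms of $u(o)$, which is not available), the argument as written proves a weaker estimate than the one claimed.
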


We remark that this result has been extended by J-b. Casteras and the first author to CMC Killing submersions (Theorem 4 of \cite{CR}).

\begin{theorem}
\label{dhl}{\hspace*{-1ex}}\textbf{. } Let $\Omega\subset M$ be a bounded $C^{2,\alpha}$ domain and $\Gamma=\partial\Omega.$ Denote by $$C\left(  \Gamma\right)  :=\left\{  \Psi\left(  t,p\right)  \,\vert\,p\in\Gamma,\,t\in\mathbb{R}\right\} $$ the Killing cylinder over $\Gamma$. Assume that the mean curvature $H_{\Gamma}\ $of $C\left(  \Gamma\right)  $ is nonnegative with respect to the inner orientation and that $$\inf_{\Omega}\operatorname*{Ric}\nolimits_{M}\geq-n\inf_{\Gamma}H_{\Gamma}^{2}.$$ Let $H\geq0$ be such that $\inf_{\Gamma}H_{\Gamma}\geq H$. Then, given $\varphi\in C^{0}(\Gamma)$ there exists a unique function $u\in C^{2,\alpha}(\Omega)\cap C^{0}(\bar{\Omega})$ whose Killing graph has mean curvature $H$ with respect to the unit normal vector $\eta$ to the graph of $u$ satisfying $\left\langle \eta,Z\right\rangle \leq0$ and $u|_{\Gamma}=\varphi$.
\end{theorem}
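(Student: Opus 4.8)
The plan is to prove uniqueness by the comparison principle and existence by the continuity method, treating first smooth boundary data and then passing to the continuous case by approximation. On any region where $|\nabla u|$ stays bounded the operator $Q_{H}$ in (\ref{pde}) is uniformly elliptic and has the divergence structure for which the classical comparison principle applies; thus if $u,v\in C^{2}(\Omega)\cap C^{0}(\bar\Omega)$ both solve $Q_{H}(\cdot)=nH$ with $u|_{\Gamma}=v|_{\Gamma}$, the difference $u-v$ can have neither an interior positive maximum nor an interior negative minimum, giving $u\equiv v$. This settles uniqueness and, together with the maximum principle, will also furnish the invertibility of the linearized operator needed below.

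For existence with $\varphi\in C^{2,\alpha}(\Gamma)$, I would extend $\varphi$ to $\Phi\in C^{2,\alpha}(\bar\Omega)$ and run the continuity method on the family of prescribed constant mean curvatures $tH$, i.e. $Q_{tH}(u_{t})=t\,nH$ with $u_{t}|_{\Gamma}=\varphi$ for $t\in[0,1]$ (equivalently, apply Leray--Schauder degree theory). At $t=0$ the minimal Killing graph equation over the mean-convex domain $\Omega$ is classically solvable, and the mean-convexity hypothesis $H_{\Gamma}\ge H\ge tH$ persists along the family; openness follows from the implicit function theorem in $C^{2,\alpha}$, since the linearization is an invertible elliptic operator with vanishing Dirichlet data. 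Closedness, hence solvability at $t=1$, reduces to a $t$-independent a priori estimate in $C^{1,\beta}(\bar\Omega)$.

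The heart of the argument is this a priori estimate, built in three steps. First, a height estimate: pieces of the Killing cylinder $C(\Gamma)$ have mean curvature $H_{\Gamma}\ge H$, and bending them along the flow $\Psi_{Z}$ produces upper and lower barriers that bound $\sup_{\bar\Omega}|u|$ in terms of $\max_{\Gamma}|\varphi|$ and the geometry; the Ricci bound $\inf_{\Omega}\operatorname{Ric}_{M}\ge -n\inf_{\Gamma}H_{\Gamma}^{2}$ is what guarantees these comparison hypersurfaces are admissible. Second, a boundary gradient estimate: near $\Gamma$ one constructs barriers of the form $\Phi\pm\psi(d)$, where $d$ is the distance to $\Gamma$ in $M$ and $\psi$ is concave with large initial slope, and the condition $H_{\Gamma}\ge H$ is precisely what makes these super/subsolutions, controlling $|\nabla u|$ on $\Gamma$. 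Third, the interior gradient estimate of Theorem \ref{dlr} bounds $|\nabla u(o)|$ by $L(u(o),r,\gamma,H)$ on balls $B_{r}(o)\Subset\Omega$; combined with the height bound this controls $|\nabla u|$ on compact subsets. Together the boundary and interior estimates yield a global bound $\sup_{\bar\Omega}|\nabla u|\le C$, independent of $t$.

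Once the gradient is bounded, (\ref{pde}) is a uniformly elliptic quasilinear equation with bounded coefficients, so the De Giorgi--Nash--Moser theory gives a uniform $C^{1,\beta}(\bar\Omega)$ bound and Schauder estimates upgrade it to a $t$-independent $C^{2,\alpha}(\bar\Omega)$ bound, closing the continuity method and producing $u\in C^{2,\alpha}(\bar\Omega)$ for smooth data. For general $\varphi\in C^{0}(\Gamma)$, I would approximate uniformly by $\varphi_{k}\in C^{2,\alpha}(\Gamma)$ and solve to obtain $u_{k}$; the barriers of the first two steps bound $\{u_{k}\}$ uniformly and control their oscillation up to $\Gamma$, while Theorem \ref{dlr} and Schauder give interior $C^{2,\alpha}_{\mathrm{loc}}$ compactness, so a subsequence converges to $u\in C^{2,\alpha}(\Omega)\cap C^{0}(\bar\Omega)$ solving the problem with $u|_{\Gamma}=\varphi$. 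I expect the boundary gradient estimate to be the main obstacle: it is exactly there that both geometric hypotheses are consumed, and the barrier construction is delicate because $Q_{H}$ degenerates as $|\nabla u|\to\infty$ and must be handled within the warped-product geometry $M\times_{\rho}\mathbb{R}$.
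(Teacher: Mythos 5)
This theorem is not proved in the paper at all: it is quoted from \cite{DLR} (which builds on \cite{DHL}), so there is no in-paper argument to compare against. Your outline --- comparison principle for uniqueness, continuity method with height, boundary-gradient and interior-gradient a priori estimates for $C^{2,\alpha}$ data, then uniform approximation for $\varphi\in C^{0}(\Gamma)$ using the interior estimate of Theorem \ref{dlr} for compactness --- is precisely the strategy of those references, and you correctly identify where each geometric hypothesis is consumed (the Ricci lower bound in the height/cylinder barriers, $H_{\Gamma}\geq H$ in the boundary gradient barriers). One refinement worth noting: continuity of the limit up to $\Gamma$ in the approximation step follows most cleanly from the comparison principle applied to pairs of solutions, which gives $\sup_{\bar\Omega}|u_{k}-u_{j}|\leq\max_{\Gamma}|\varphi_{k}-\varphi_{j}|$ and hence uniform convergence on $\bar\Omega$, rather than from re-running the barrier construction for each $\varphi_{k}$.
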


\bigskip

Recall that $\sigma\in C^{0}(M)$ is a subsolution for $Q_{H}$ in $M$ if, given a bounded subdomain $\Lambda\subset M,$ if $u\in C^{0}\left(\overline{\Lambda}\right)  \cap C^{2}\left(\Lambda\right)  $ satisfies $Q_{H}\left(  u\right)  =0$ in $\Lambda$ and $\sigma|_{\partial\Lambda}\leq u|_{\partial\Lambda}$ then $\sigma\leq u.$ Supersolution is defined by replacing ``$\leq$'' by ``$\geq$''.

\medskip

The validity of Perron's method can be proved for general Killing graphs and general domains of a Riemannian manifold. However, we state and prove it only on the case that the domain is a whole Cartan-Hadamard manifold and the boundary data is prescribed at infinity, case that we are interested in this paper.

\medskip

Let $N$ be a Cartan-Hadamard manifold, that is, $N$ simply connected with sectional curvature $K_{N}\leq 0.$ Then it is well defined the asymptotic boundary $\partial_{\infty}N$ of $N$, a set of equivalence classes of geodesic rays of $N$. We recall that two geodesic rays $\alpha,\beta:\left[0,\infty\right)  \rightarrow N$ are in the same class if there is $C>0$ such that $d(\alpha(t),\beta(t))\leq C$ for all $t\in\left[  0,\infty\right)  $ where $t$ is the arc length. With the so called cone topology $\overline {N}:=N\cup\partial_{\infty}N$ is a compactification of $N$ (see \cite{BO})$.$ Since $M$ is totally geodesic in $N,$ $M$ is also a Cartan-Hadamard manifold and we may naturally consider $\partial_{\infty}M$ as a subset of $\partial_{\infty}N.$ The equivalence relation between geodesics and convergent sequences are preserved under isometries, therefore the action of $\Psi$ on $N$ extends to the compactification $\overline{N}$ of $N$ and the extended action is continuous.

Taking into account the solvability of the Dirichlet problem \eqref{dp} in small balls (Theorem \ref{dhl} above), and the maximum principle, standard arguments (See Section 2.8 of \cite{GT}) prove the following  result:

\begin{lemma}
\label{comp}If $v,w\in C^{0}\left(\overline{M}\right)$  are respectively a subsolution and a supersolution of of $Q_{H}$ in $M$ such that $v|_{\partial_{\infty}M}\leq w|_{\partial_{\infty}M}$ then $v\leq w$ in $\overline{M}.$
\end{lemma}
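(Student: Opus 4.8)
The plan is to establish the comparison principle by a standard maximum-principle argument adapted to the asymptotic setting, following the template of Section 2.8 of \cite{GT}. The essential idea is to suppose, for contradiction, that $v>w$ somewhere in $M$ and to examine the maximum of $v-w$, using the interior solvability of the Dirichlet problem (Theorem \ref{dhl}) to run a Perron-type lifting argument. Concretely, I would first set $m=\sup_{M}(v-w)$ and argue that if $v\leq w$ fails then $m>0$; the hypothesis $v|_{\partial_\infty M}\leq w|_{\partial_\infty M}$ together with continuity on the compactification $\overline M$ forces this supremum to be attained at an \emph{interior} point $x_0\in M$, since near $\partial_\infty M$ the difference $v-w$ is bounded above by values tending to something $\leq 0$. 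This reduction to an interior maximum is where the asymptotic boundary condition does its work.

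Having localized the problem, I would carry out the comparison on a small geodesic ball. Around $x_0$, choose $r<i(x_0)$ so that Theorem \ref{dhl} applies on $B_r(x_0)$ (one checks the Killing-cylinder mean-convexity and Ricci hypotheses are met on small balls since $C(\partial B_r)$ becomes arbitrarily convex as $r\to 0$, and $N$ being Cartan--Hadamard controls the curvature terms). Solve $Q_H(\tilde u)=nH$ in $B_r(x_0)$ with boundary data $\tilde u|_{\partial B_r(x_0)}=w|_{\partial B_r(x_0)}$, obtaining a genuine solution $\tilde u\in C^{2,\alpha}(B_r)\cap C^0(\overline{B_r})$. Because $w$ is a supersolution and $\tilde u$ solves the equation with $\tilde u=w$ on $\partial B_r$, the supersolution property gives $w\geq \tilde u$ on $\overline{B_r}$; dually, since $v$ is a subsolution and $v|_{\partial B_r}\leq w|_{\partial B_r}=\tilde u|_{\partial B_r}$ (using $v\leq w$ on $\partial B_r$, which holds provided $x_0$ is a \emph{strict} interior maximum, i.e. $v-w<m$ on $\partial B_r$), the subsolution property gives $v\leq \tilde u$ on $\overline{B_r}$. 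Hence $v\leq \tilde u\leq w$ throughout $B_r$, contradicting $(v-w)(x_0)=m>0$.

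The main obstacle is the possibility that the supremum $m$ is attained only on a larger set, so that one cannot find a ball $B_r(x_0)$ on whose boundary $v-w<m$ strictly; this is the familiar subtlety that makes the raw ``touching'' argument insufficient and necessitates the full strong-maximum-principle machinery. I would handle this in the standard way: let $S=\{x\in M : (v-w)(x)=m\}$, which is closed in $M$; I claim $S$ is also open, whence $S=M$ by connectedness, forcing $v-w\equiv m$, which together with $v|_{\partial_\infty M}\leq w|_{\partial_\infty M}$ and continuity on $\overline M$ yields $m\leq 0$, a contradiction. To prove openness, I take any $x_1\in S$ and a small ball $B_r(x_1)$, solve the Dirichlet problem with boundary data $w|_{\partial B_r(x_1)}$ as above to get $w\geq\tilde u\geq v$ on $\overline{B_r}$; since $(v-w)(x_1)=m$ forces equality $v=\tilde u=w$ at $x_1$, and $v-\tilde u$ (resp. $\tilde u - w$) satisfies a linear elliptic inequality with no zeroth-order term (the operator $Q_H$ linearizes to such a form), the strong maximum principle of Hopf propagates the interior-maximum equality across all of $B_r(x_1)$, so $B_r(x_1)\subset S$.

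I would note that the linearization of $Q_H$ is uniformly elliptic on the relevant $C^0$-bounded, $C^1$-controlled family of functions—the gradient bound coming from Theorem \ref{dlr} keeps the coefficients $\tfrac{1}{\sqrt{\gamma+|\nabla u|^2}}$ nondegenerate—so the strong maximum principle applies without difficulty. The only genuinely nonroutine point, which I would state carefully rather than grind through, is verifying that the Killing-cylinder convexity and Ricci bound in the hypotheses of Theorem \ref{dhl} are automatically satisfied on sufficiently small geodesic balls of a Cartan--Hadamard manifold; this is what licenses invoking interior solvability at every scale and is exactly the ingredient that the cited \cite{GT} Euclidean argument replaces by solvability of the classical Dirichlet problem on small balls.
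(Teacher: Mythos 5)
The paper gives no written proof of Lemma \ref{comp} (it is delegated to ``standard arguments'' as in Section 2.8 of \cite{GT}), and your overall template --- interior maximum via compactness of $\overline{M}$, an open-and-closed argument, small-ball solvability from Theorem \ref{dhl}, and the strong maximum principle --- is the right one. But the execution of the openness step has a genuine gap, in fact an internal contradiction. Having fixed $x_1\in S=\{v-w=m\}$ with $m>0$, you solve the Dirichlet problem on $B_r(x_1)$ with boundary data $w$ and claim $w\geq\tilde u\geq v$ on $\overline{B_r}$. The inequality $\tilde u\geq v$ would follow from the subsolution property of $v$ only if $v\leq\tilde u=w$ on $\partial B_r(x_1)$ --- which is exactly what you do not know, and which is in fact \emph{false} near $x_1$, since $v-w=m>0$ at $x_1$ and hence $v>w$ on all of $\partial B_r(x_1)$ for $r$ small. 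Your subsequent assertion that ``$(v-w)(x_1)=m$ forces $v=\tilde u=w$ at $x_1$'' contradicts $m>0$. A second, independent problem is that you invoke the Hopf strong maximum principle for $v-\tilde u$ and $\tilde u-w$: under the paper's definition, $v$ and $w$ are merely continuous functions characterized by a comparison property, so they satisfy no differential inequality and cannot be fed into Hopf's lemma.

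The standard repair uses the observation that $Q_H$ depends only on $\nabla u$, not on $u$, so it is invariant under adding constants; consequently $w+m$ is again a supersolution, and $v\leq w+m$ holds on all of $M$ by the definition of $m$. On a small ball $B=B_r(x_1)$ solve the Dirichlet problem \emph{twice} (Theorem \ref{dhl}, whose hypotheses hold on small balls exactly as you note and as in the proof of Theorem \ref{perron}): let $\bar u$ be the solution with data $v|_{\partial B}$ and $\bar{\bar u}$ the solution with data $w|_{\partial B}$. The comparison definitions give $v\leq\bar u$ and $\bar{\bar u}\leq w$ in $B$, while the classical comparison principle for this quasilinear operator (no zeroth-order dependence) applied to the two $C^2$ solutions $\bar u$ and $\bar{\bar u}+m$, whose boundary data satisfy $v\leq w+m$, gives $\bar u\leq\bar{\bar u}+m$ in $B$. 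Evaluating at $x_1$ forces $\bar u(x_1)=\bar{\bar u}(x_1)+m$, and now the strong maximum principle applies legitimately to the $C^2$ function $\bar u-(\bar{\bar u}+m)$, which satisfies a linear elliptic equation without zeroth-order term; hence $\bar u\equiv\bar{\bar u}+m$ on $B$, so $v-w=m$ on $\partial B_r(x_1)$ for every small $r$, proving $S$ is open. Connectedness then gives $v-w\equiv m>0$ on $M$, contradicting $v\leq w$ on $\partial_{\infty}M$ by continuity on $\overline{M}$. Your reduction to an interior maximum and your remarks on verifying the hypotheses of Theorem \ref{dhl} at small scales are correct as stated.
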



\medskip We next prove Perron's method, which proof is also well known. For completeness, we give some details.

\begin{theorem}
\label{perron}Let $N$ be a Cartan-Hadamard manifold. Let $\phi\in C^{0}\left(\partial_{\infty}M\right)  $ and $H\in\mathbb{R}$ be given$.$ Assume that there are a subsolution $\sigma\in C^{0}\left(  \overline{M}\right)  $ and a supersolution $w\in C^{0}\left(  \overline{M}\right) $ of $Q_{H}$ in $M$ such that $\sigma|_{\partial_{\infty}M}\leq\phi\leq w|_{\partial_{\infty}M}.$ Set
$$S_{\phi}=\left\{  v\in C^{0}\left(  \overline{M}\right)\,\vert\, v\text{ is a subsolution of }Q_{H}\text{ such that }v|_{\partial_{\infty}M}<\phi\right\}$$
and
$$u_{\phi}(x):=\sup_{v\in S_{\phi}}v(x),\text{ }x\in\overline{M}.$$
Then $u_{\phi}\in C^{\infty}\left(  M\right)  $ and $Q_{H}\left(  u_{\phi}\right)  =0.$
\end{theorem}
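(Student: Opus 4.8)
The plan is to run the classical Perron argument, with the three ingredients supplied by the excerpt playing distinguished roles: the solvability of the Dirichlet problem on small geodesic balls (Theorem \ref{dhl}), the interior gradient estimate (Theorem \ref{dlr}), and the comparison principle (Lemma \ref{comp}). The first thing I would record is that $Q_{H}$ depends on $u$ only through $\nabla u$ and its derivatives, never through $u$ itself; since $\gamma$ and $\bar{\nabla}_{Z}Z$ are functions of the base point alone, $Q_{H}(u+c)=Q_{H}(u)$ for every constant $c$. Consequently, if $\sigma$ is a subsolution so is $\sigma-\epsilon$, and $(\sigma-\epsilon)|_{\partial_{\infty}M}=\sigma|_{\partial_{\infty}M}-\epsilon<\phi$, which shows $S_{\phi}\neq\emptyset$. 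Dually, every $v\in S_{\phi}$ satisfies $v|_{\partial_{\infty}M}<\phi\leq w|_{\partial_{\infty}M}$, so Lemma \ref{comp} gives $v\leq w$; hence $u_{\phi}$ is well defined and $\sigma\leq u_{\phi}\leq w$ on $\overline{M}$. I would also note the elementary fact that the pointwise maximum of finitely many subsolutions is again a subsolution.

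The local engine of the proof is the \emph{Killing lift}. Given $v\in S_{\phi}$ and a geodesic ball $B=B_{r}(o)$ with $\bar{B}\subset M$ chosen small enough that Theorem \ref{dhl} applies, let $V$ be the solution of $Q_{H}(V)=0$ in $B$ with $V|_{\partial B}=v|_{\partial B}$, extended by $V=v$ on $M\setminus B$. The comparison principle yields $V\geq v$, and a standard gluing argument shows that $V$ is again a subsolution; since $V=v$ off the bounded set $B$, its asymptotic trace is unchanged, so $V\in S_{\phi}$. The admissibility of small balls is exactly the point already used before Lemma \ref{comp}: the mean curvature of the Killing cylinder over $\partial B_{r}$ blows up as $r\to 0$, so the hypotheses $\inf_{\partial B}H_{\partial B}\geq H$ and the Ricci bound of Theorem \ref{dhl} are met once $r$ is sufficiently small.

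Now fix $x_{0}\in M$ and a ball $B=B_{r}(x_{0})$ as above. Choose $v_{k}\in S_{\phi}$ with $v_{k}(x_{0})\to u_{\phi}(x_{0})$; replacing $v_{k}$ by $\max(v_{1},\dots,v_{k})$ and then by its Killing lift on $B$, I may assume that the $v_{k}$ are monotone increasing, solve $Q_{H}=0$ in $B$, satisfy $\sigma\leq v_{k}\leq u_{\phi}\leq w$, and still obey $v_{k}(x_{0})\to u_{\phi}(x_{0})$. The interior gradient estimate (Theorem \ref{dlr}), combined with the uniform bound $\sigma\leq v_{k}\leq w$, furnishes local $C^{1}$ bounds on the $v_{k}$ over compact subsets of $B$; Schauder theory for the now uniformly elliptic equation then gives $C^{2,\alpha}_{\mathrm{loc}}$, and by bootstrapping $C^{\infty}_{\mathrm{loc}}$, bounds. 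Hence a subsequence converges to some $V\in C^{\infty}(B)$ with $Q_{H}(V)=0$, $V(x_{0})=u_{\phi}(x_{0})$, and $V\leq u_{\phi}$ in $B$. To finish, I would prove $V\equiv u_{\phi}$ in $B$: if some $y\in B$ had $V(y)<u_{\phi}(y)$, pick $\tilde v\in S_{\phi}$ with $\tilde v(y)>V(y)$, lift $\max(\tilde v,v_{k})$ on $B$, and pass to the limit to obtain a solution $W$ of $Q_{H}=0$ in $B$ with $W\geq V$, $W(y)\geq\tilde v(y)>V(y)$, and $W(x_{0})=u_{\phi}(x_{0})=V(x_{0})$. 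The difference $W-V\geq 0$ satisfies the linear elliptic equation produced by subtracting the two copies of the quasilinear operator and attains the interior minimum value $0$ at $x_{0}$, so the strong maximum principle forces $W\equiv V$, contradicting $W(y)>V(y)$. Thus $u_{\phi}$ solves $Q_{H}(u_{\phi})=0$ near the arbitrary point $x_{0}$, and elliptic regularity gives $u_{\phi}\in C^{\infty}(M)$.

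The step I expect to be the main obstacle is the local one. On the geometric side it is the verification that sufficiently small geodesic balls satisfy the structural hypotheses of Theorem \ref{dhl}, without which the Killing lift is unavailable; on the analytic side it is assembling the interior gradient estimate of Theorem \ref{dlr} with Schauder theory to extract the compactness needed for the limits $V$ and $W$. Because $Q_{H}$ is genuinely quasilinear, no convergence of the $v_{k}$ is available until the a priori gradient bound is in hand, so Theorem \ref{dlr} is the indispensable input at precisely this juncture.
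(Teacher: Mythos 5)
Your proposal is correct and follows essentially the same route as the paper: solvability on small geodesic balls (Theorem \ref{dhl}, with the smallness justified by the mean curvature of small Killing cylinders, which the paper gets from the Hessian comparison theorem), the Killing/CMC lift of subsolutions, compactness via the interior gradient estimate of Theorem \ref{dlr}, and the standard Gilbarg--Trudinger Section 2.8 argument to identify the limit with $u_{\phi}$. You actually supply more detail than the paper does at two points it leaves implicit --- the nonemptiness of $S_{\phi}$ via translation invariance of $Q_{H}$, and the strong maximum principle step showing $V\equiv u_{\phi}$ on the ball --- but the skeleton is identical.
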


\begin{proof}
From the hypothesis and Lemma \ref{comp} it follows that $u_{\phi}$ is a well defined function on $\overline{M}.$ Given $x\in M,$ let $B_{r}(x)\subset M$ be the open geodesic ball centered at $x$ and with radius $r>0$ in $M.$ From the Hessian comparison theorem, for $r=r_{x}$ sufficiently small, if 
$$C_{r_{x}}:=\left\{  \Psi\left(  t,p\right)\,\vert\,p\in\partial B_{r_{x}}(x),\text{ }t\in\mathbb{R}\right\},$$
then $H_{x}\geq\left\vert H\right\vert ,$ where $H_{x}$ is the mean curvature of $C_{r_{x}}$ with respect to the inner orientation$\ $and, clearly, $\inf_{B_{r_{x}}}\operatorname*{Ric}\nolimits_{M}\geq-n\inf_{C_{r_{x}}}H_{x}^{2}.$

Now, take a sequence $s_{m}\in C^{0}\left(  \overline{M}\right)  $ of subsolutions such that $s_{m}|_{\partial_{\infty}M}\leq\phi$ and $\lim _{m}s_{m}(x)=u_{\phi}(x).$ 
By Theorem \ref{dhl} there is a solution $w_{m,x}\in C^{\infty}\left(  B_{r_{x}}(x)\right)  $ of $Q_{H}=0.$ We now define the CMC $H$ lift $t_{m}\in C^{0}\left(  \overline{M}\right)  $ of $s_{m}$ by setting 
$$t_{m}(y)=\left\{ \begin{array} [c]{ll}s_{m}(y) & \text{if }y\in M\backslash B_{r_{x}}\left(  x\right) \\ w_{m,x}(y) & \text{if }y\in B_{r_{x}}\left(  x\right).\end{array} \right.$$
Using Theorem \ref{korevaar} it follows that $w_{m,x}$ contains a subsequence converging uniformly on compact subsets of $B_{r_{x}}(x)$ to a solution $w_{x}\in C^{\infty}\left(  B_{r_{x}}(x)\right)  $ of $Q_{H}=0.$ As in \cite{GT}, Section 2.8, we may prove that $w_{x}=u_{\phi}|_{B_{r_{x}}(x)}$. This proves that $u_{\phi}\in C^ {\infty}\left(  M\right)  $ and satisfies $Q_{H}\left(  u_{\phi}\right)  =0.$
\end{proof}

\

\



\section{\label{hy}Asymptotic Plateau's problem for CMC hypersurfaces in $\mathbb{H}^{n+1}$}

\qquad Let $\mathbb{H}^{n+1},$ $n\geq2,$ be the hyperbolic space with sectional curvature $-1$. It is well known that there is a conformal diffeomorphism $F$ between $\overline{\mathbb{H}}^{n+1}$ and the unit closed ball $\overline{B}$ of $\mathbb{R}^{n+1}$ such that $F|_{\mathbb{H}^{n+1}}:\mathbb{H}^{n+1}\rightarrow B$ is an isometry, and any Killing vector field in $B$ is the restriction of a conformal vector field of $\mathbb{R}^{n+1}$ that leaves $B$ invariant (see \cite{C}). 

Furthermore, spheres at the asymptotic boundary are well defined. A $m-$dimensional sphere $E$ of $\partial_{\infty}\mathbb{H}^{n+1}$ is defined as $E=F^{-1}(\widetilde{E})$ where $\widetilde{E}\subset\partial B$ is an usual $m-$dimensional sphere, that is, the intersection between $\partial B$ and a $\left(  m+1\right)  -$dimensional linear subspace of $\mathbb{R}^{n+1}.$ If $m=1$ then $E$ is a circle and if $m=n-1$ then $E$ is a hypersphere of $\ab \mathbb{H}^{n+1}$, wich from now on we will call simply ``sphere'', to avoid confusion with hyperspheres of hyperbolic space, that have spheres as asymptotic boundary.

From now on, we say that a set $A\subset \ab \Hi^{n+1}$ is {\em between} two spheres $E_1,E_2\subset \Hi^{n+1}$ if $A\subset U_1\cap U_2$, where $U_i$ is the connected component of $\ab \Hi^{n+1}\setminus E_i$ that contains $E_j,\,j\neq i$.

\medskip

With the previous remarks, we are ready to write our main result.

\begin{theorem}\label{par} Let $p\ $be a point in the asymptotic boundary $\partial_{\infty}\mathbb{H}^{n+1}$ of $\mathbb{H}^{n+1}$ and $\Gamma\subset\partial_{\infty}\mathbb{H}^{n+1}$be\ a compact embedded topological hypersurface passing through $p.$ Assume that there are two spheres $E_{1}$ and $E_{2}$ of $\partial_{\infty}\mathbb{H}^{n+1}$ which are tangent to $p\in E_{1}\cap E_{2}$ and such that $\Gamma$ is contained between $E_{1}$ and $E_{2},$ and, moreover, that any circle of $\partial_{\infty}\mathbb{H}^{n+1}$ passing through $p$ orthogonal to $E_{1}$ intersects $\Gamma$ at one and only one point.

Then, given $\left\vert H\right\vert <1,$ there exists a unique properly embedded, complete $C^{\infty}$ hypersurface $\Sigma$ of $\mathbb{H}^{n+1}$ with CMC $H$ such that $\partial_{\infty}\Sigma=\Gamma$ and $\overline{\Sigma}=\Sigma\cup\Gamma$ is a compact embedded topological hypersurface of $\overline{\mathbb{H}}^{n+1}.$

Moreover, $\Sigma$ is a parabolic graph, that is, there exists a totally geodesic hypersurface $\mathbb{H}^{n}$ of $\mathbb{H}^{n+1}$, a Killing field $Y$ which orbits in $\mathbb{H}^{n+1}$ are horocycles orthogonal to $\mathbb{H}^{n}$and in $\partial_{\infty}\mathbb{H}^{n+1}$ are the circles orthogonal to $E_{1}$ at $p,$ and a function $u\in C^{\infty}\left(  \mathbb{H}^{n}\right) \cap C^{0}\left(  \overline{\mathbb{H}}^{n}\backslash\left\{  p\right\}\right)  $ such that $\Sigma={\rm{Gr}}_{Y}(u)$.

\end{theorem}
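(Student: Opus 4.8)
The plan is to normalize the configuration by a hyperbolic isometry and then solve the asymptotic Dirichlet problem \eqref{dp} by Perron's method. First I would send $p$ to the point $\{x_{n+1}=+\infty\}$ of the half-space model $\mathbb{R}^{n+1}_{+}$. Since $(n-1)$-spheres of $\ab\Hi^{n+1}$ tangent at $p$ become parallel Euclidean hyperplanes of $\{x_{n+1}=0\}$, I may assume $E_i=\{x_1=a_i,\,x_{n+1}=0\}$ with $a_1\le a_2$, take $\Hi^n=\{x_1=0\}$ as the totally geodesic hypersurface and $Y=\partial/\partial x_1$ as the parabolic Killing field: its orbits are the horizontal lines orthogonal to $\Hi^n$ (horocycles based at $p$), and on $\ab\Hi^{n+1}$ they are exactly the lines through $p$ orthogonal to $E_1$. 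Under this identification the hypothesis that each such circle meets $\Gamma$ once says precisely that $\Gamma$ is the $Y$-Killing graph of a single-valued $\phi\in C^0(\ab\Hi^n\setminus\{p\})$, and the condition that $\Gamma$ lie between $E_1$ and $E_2$ becomes $a_1\le\phi\le a_2$. It then suffices to produce $u\in C^\infty(\Hi^n)\cap C^0(\overline{\Hi}^n\setminus\{p\})$ with $Q_H(u)=0$ and $u|_{\ab\Hi^n\setminus\{p\}}=\phi$, for then $\Sigma={\rm Gr}_Y(u)$ is the sought hypersurface.

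For the global barriers required by Theorem \ref{perron} I would use the equidistant hypersurfaces to totally geodesic hyperplanes, which in the half-space model are the tilted Euclidean hyperplanes $\{x_1=a_i+c\,x_{n+1}\}$: each is umbilic with constant principal curvature $\tanh\rho\in(-1,1)$, so the tilt $c=c(H)$ can be chosen to make it CMC $H$, and its asymptotic boundary is $E_i\cup\{p\}$. Writing $\sigma$ and $w$ for the affine Killing-graph functions $a_1+c\,x_{n+1}$ and $a_2+c\,x_{n+1}$ (the same $c$ works for both, as horizontal $x_1$-translation is a hyperbolic isometry), both solve $Q_H=0$, so each is simultaneously a sub- and a supersolution, $\sigma\le w$, and $\sigma|_{\ab\Hi^n}=a_1\le\phi\le a_2=w|_{\ab\Hi^n}$ away from $p$. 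Feeding $\sigma,w,\phi$ into Theorem \ref{perron} produces $u=u_\phi\in C^\infty(\Hi^n)$ with $Q_H(u)=0$ and $\sigma\le u\le w$; in particular $u-c\,x_{n+1}$ stays bounded, which already fixes the correct blow-up rate and direction of $u$ as one approaches $p$.

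The crux is upgrading this interior solution to attain the boundary data, i.e. proving $u\in C^0(\overline{\Hi}^n\setminus\{p\})$ with $u=\phi$ on $\ab\Hi^n\setminus\{p\}$; the sandwich $\sigma\le u\le w$ only yields the crude bound $a_1\le\liminf u\le\limsup u\le a_2$. For each finite boundary point $q\neq p$ I would construct local upper and lower CMC $H$ barriers whose asymptotic boundaries pinch $\Gamma$ only near $q$ — this is the geometric construction I expect to be delivered by Proposition \ref{lbarrier} — built, for instance, from suitably positioned equidistant hypersurfaces or from the Guan--Spruck hypersurfaces of Theorem \ref{gs} over small boundary spheres. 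Combining these with the continuity of $\phi$ and the comparison Lemma \ref{comp} forces $u(x)\to\phi(q)$ as $x\to q$, while the behavior at $p$ is governed by the global barriers $\sigma,w$, which control the tilt and make $\overline\Sigma=\Sigma\cup\Gamma$ close up through $p$. I expect this boundary-regularity step, and in particular the verification that the proposed barriers are genuine sub/supersolutions of the \emph{parabolic} operator $Q_H$ (whose lower-order term $\langle\nabla u,\bar\nabla_Y Y\rangle$ reflects the non-geodesic horocyclic flow), to be the principal difficulty.

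Finally I would read off the geometric statement: since $u$ is smooth on all of $\Hi^n$, its Killing graph $\Sigma$ is a complete, properly embedded $C^\infty$ hypersurface of CMC $H$, and by the boundary analysis $\ab\Sigma=\Gamma$ with $\overline\Sigma=\Sigma\cup\Gamma$ a compact embedded topological hypersurface of $\overline{\Hi}^{n+1}$. Uniqueness follows from the comparison principle: any two such hypersurfaces are Killing graphs of solutions of $Q_H=0$ sharing the asymptotic values $\phi$, so Lemma \ref{comp} gives equality. This mirrors exactly the scheme one uses to recover Theorem \ref{gs} with hyperbolic, rather than parabolic, Killing fields.
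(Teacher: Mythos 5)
Your proposal follows essentially the same route as the paper: reduce to the asymptotic Dirichlet problem for the parabolic Killing graph operator with $p$ sent to $\{x_{n+1}=\infty\}$, feed equidistant (tilted Euclidean hyperplane) solutions through $E_1$ and $E_2$ into Perron's method (the paper takes $\sigma\equiv 0$ and the equidistant hypersphere over $E_2$, an immaterial variation), and obtain boundary continuity from the lower barriers of Proposition \ref{lbarrier} together with CMC $H$ hyperspheres used as upper obstacles via the comparison principle. The one step you leave open, the construction behind Proposition \ref{lbarrier}, is precisely the stacked totally geodesic hyperspheres argument the paper supplies separately, so your attempt is correct modulo that stated proposition and matches the paper's proof in all essentials.
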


Before proving Theorem \ref{par}, we present some special elements of $S_{\phi}$ (see notation of Theorem \ref{perron}) that work as lower barriers. 

\begin{proposition}\label{lbarrier}
Given $q$ in the connected component of $\ab\Hi^{n+1}\setminus \Gamma$ that contains $E_1=\ab \Hi^n$, there exists a function $w\in S_{\phi}$ such that the asymptotic boundary of ${\rm{Gr}}_Y(w)$ separates $q$ and $\Gamma$.  
\end{proposition}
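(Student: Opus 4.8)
The plan is to construct the barrier function $w$ explicitly as a Killing graph over a suitable region, using the geometry of the hyperbolic space and the freedom to choose CMC $H$ hypersurfaces whose asymptotic boundaries are spheres separating $q$ from $\Gamma$. First I would exploit the fact that between $q$ and $\Gamma$ there is room to insert a sphere $E$ of $\partial_{\infty}\Hi^{n+1}$: since $q$ lies in the component of $\ab\Hi^{n+1}\setminus\Gamma$ containing $E_1=\ab\Hi^n$, and $\Gamma$ is compact and embedded, one can find a sphere $E$ (close to a small geodesic sphere around $q$, or a hyperplane separating the two) that strictly separates $q$ from $\Gamma$ while still being a graph in the parabolic Killing coordinates. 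The natural candidate for $\mathrm{Gr}_Y(w)$ is then a piece of a standard rotationally symmetric CMC $H$ hypersurface of $\Hi^{n+1}$ (a so-called equidistant or horosphere-type CMC surface) whose asymptotic boundary is exactly $E$, oriented so that its mean curvature with respect to the relevant normal matches $H$.

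The key steps, in order, are as follows. Step one: use the conformal model to choose the separating sphere $E$ so that $E=\partial_\infty\Sigma_0$ for an explicit CMC $H$ hypersurface $\Sigma_0$ (for $|H|<1$ these are the umbilic or rotationally invariant CMC leaves of $\Hi^{n+1}$, which are totally classified). Step two: verify that $\Sigma_0$, or a large enough portion of it, is a parabolic Killing graph $\mathrm{Gr}_Y(w)$ over $\Hi^n$ for some function $w$ — this amounts to checking that the $Y$-orbits (horocycles orthogonal to $E_1$ at $p$) meet $\Sigma_0$ transversally in exactly one point, which follows from the tangency of $E$ at $p$ and the orthogonality hypothesis on the circles through $p$. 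Step three: confirm that $w$ is a subsolution of $Q_H$, i.e. $Q_H(w)\geq 0$ (equivalently $w\in S_\phi$), by checking the sign of the mean curvature of $\Sigma_0$ relative to the orientation convention $\langle\eta,Y\rangle\le0$; since $\Sigma_0$ is genuinely CMC $H$ we in fact get $Q_H(w)=0$, so it is trivially a subsolution, and the only real content is the orientation bookkeeping. Step four: check the asymptotic boundary condition $w|_{\ab\Hi^n}<\phi$, which is precisely the statement that $\partial_\infty\mathrm{Gr}_Y(w)=E$ separates $q$ and $\Gamma$, so that $w$ lies strictly below the boundary data coming from $\Gamma$ on the relevant side.

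The main obstacle I expect is Step two together with the orientation analysis in Step three: one must ensure that the separating CMC hypersurface $\Sigma_0$ is genuinely a \emph{global} parabolic Killing graph over all of $\Hi^n$ (so that $w$ is defined on the whole domain $M=\Hi^n$ and extends continuously to $\ab\Hi^n\setminus\{p\}$), rather than only a local graph. The tangency of $E_1$ and $E_2$ at the single point $p$ is what makes the parabolic (horocyclic) Killing field the correct choice, since its orbits degenerate exactly at $p$; but one has to verify that the chosen $\Sigma_0$ has asymptotic boundary $E$ meeting the $Y$-orbit structure compatibly near $p$, so that no orbit meets $\Sigma_0$ more than once and the graph is well defined. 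I would handle this by working in the half-space model with $p=\{x_{n+1}=+\infty\}$, where $Y$ becomes a horizontal translation, horocycles become vertical lines, totally geodesic $\Hi^n$ becomes a vertical hyperplane, and the separating CMC hypersurfaces become explicit Euclidean spheres or graphs; in these coordinates the graph property and the strict inequality $w|_{\ab\Hi^n}<\phi$ become elementary Euclidean separation statements, and the subsolution property reduces to a direct sign check in equation \eqref{pde}.
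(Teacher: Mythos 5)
Your strategy---produce the lower barrier from a \emph{single} umbilic CMC hypersurface $\Sigma_0$ whose asymptotic boundary is a sphere $E$ separating $q$ from $\Gamma$---does not work, and the obstacle you yourself flag in Steps two and three is not a technicality that evaporates in half-space coordinates: it is fatal. Work in the model the paper uses, with $p=\{x_{n+1}=\infty\}$, $E_1=\{x_1=0,\ x_{n+1}=0\}$, $\mathbb{H}^n=\{x_1=0,\ x_{n+1}>0\}$, $Y=\partial_{x_1}$ (so $Y$-orbits are the horizontal lines $\{(t,x,y):t\in\mathbb{R}\}$), and $q=(l,0,\dots,0)$ with $l>0$. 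There are only two cases for $E$. If $E$ does not pass through $p$, it is a compact Euclidean sphere in $\{x_{n+1}=0\}$ and $\Sigma_0$ is a Euclidean spherical cap of bounded diameter; every orbit with $y$ large misses $\Sigma_0$ entirely, so $\Sigma_0$ is a Killing graph only over a bounded half-disc of $\mathbb{H}^n$ and never yields a function $w\in C^0(\overline M)$ defined on all of $M=\mathbb{H}^n$. Extending by $0$ outside that half-disc is continuous only when the cap's equatorial height is $\le 0$, and a cap with equator at height $\le 0$ that is tall enough to enclose $q$ is necessarily so wide that nothing prevents it from crossing $\Gamma$, since $\Gamma$ may dip arbitrarily close to $E_1$ away from $q$. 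If instead $E$ passes through $p$, then $\Sigma_0$ is a tilted (equidistant) half-hyperplane, which \emph{is} a global graph, but its asymptotic boundary is a hyperplane $\{x_1=c\}$, and separation forces $l<c\le\inf_{\partial_\infty\mathbb{H}^n}\phi$; since nothing in the hypotheses prevents $\inf\phi=0<l$, no such $c$ exists in general. So no single umbilic hypersurface, however oriented, can serve as $w$, and your Step four cannot be completed.

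The paper's proof circumvents exactly this by abandoning the single-surface idea: it builds $w$ as an iterated maximum of the graph functions $v_j$ of a finite stack of totally geodesic hemispheres $H_0,\dots,H_k$ with geometrically decreasing radii $R_j=(\cos\alpha/\cos\beta)^j$ and centers $t_j$ chosen so that consecutive hemispheres cross along prescribed parallels, with $H_j$ lying below $H_{j-1}$ near the rim of its domain $D_j$. This makes each $w_j=\max\{v_j,w_{j-1}\}$ (extended by $w_{j-1}$ outside $D_j$, and with $w_0$ glued continuously to $0$ because $t_0<0$) a continuous \emph{global} subsolution, while the tops $t_j+R_j$ climb monotonically past $l$ without the stack ever leaving the region between $q$ and $\Gamma$; the quantitative input is the choice of $\alpha$ in \eqref{alpha} forcing $\lim_j t_j\in(l,l+1)$. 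What your proposal is missing is precisely this gluing/stacking mechanism---the fact that a supremum of subsolutions which agree suitably near the boundaries of their domains is again a subsolution on all of $\mathbb{H}^n$---which is the actual content of the proposition.
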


\begin{proof}
Let $S\subset \mathbb{H}^{n+1}$ be the biggest totally geodesic hypersphere such that $\ab S$ is contained in the connected component of $\ab \Hi^{n+1}\setminus \Gamma$ that contains $E_1$. To do more easily some computations, we work on the upper half-space model $\mathbb{R}_+^{n+1}$; in this model we assume without loss of generality that $E_1=\{x_1=0,x_{n+1}=0\}$, $q=(l,0,\dots,0)$ for some $l>0$ and $S$ is the Euclidean (upper hemisphere of the) hypersphere centered at $q$ with radius $1$. In order to simplify the notation, we will write $(t,x,y)$, $t\in \mathbb{R}$, $x\in \mathbb{R}^{n-1}$ and $y>0$ as coordinates of any point. Hence $q=(l,0,0)$.

The choice of $S$ together with the fact that $\Gamma$ is a Killing graph imply that $\Gamma$ do not intersect the asymptotic boundary neither of the Killing (finite) cylinder $$C=\{(t,x,\sqrt{1-|x|^2})\,|\,t\in[0,l],\,|x|\le 1\}$$ nor of the upper hemisphere $$S^+=\{(t,x,y)\,|\,l\le t \le l+1, |x|\le 1, 0<y\le 1 \}.$$

We proceed with a ``stacking'' of portions of totally geodesic hyperspheres $H_k$ centered at the line $\{(t,0,\dots,0)\,|\,t\in[0,l]\}$ with decreasing radius and also with the condition that the (Euclidean) ``parallel $\alpha$'' of one of them coincides with the (Euclidean) ``parallel $\beta:=\alpha/2$'' of the next one. Here $\alpha\in(0,\pi/2)$ will be choosen small enough in order to obtain a graph separating $q$ from $\Gamma$. Precisely, we choose $\alpha$ such that 
\begin{equation}
l<\cos \beta \frac{\sin \alpha - \sin \beta}{\cos \beta - \cos \alpha} <l+1.  
\label{alpha}\end{equation}

 This choice is possible since the right-hand-side of the first inequality above is a continuous function on $\alpha$ that goes to $\infty$ as $\alpha$ goes to zero.

We define inductively the Euclidean center $C_k:=(t_k,0,\dots,0)$ and radii $R_k$ of the hyperspheres $H_k$:
$$R_0 =1, R_k=\frac{\cos \alpha}{\cos \beta}R_{k-1}, k\ge 1$$ (as an immediate consequence, $R_k=(\cos \alpha/\cos \beta)^k$) and
$$t_0 = -\sin \beta, t_k = t_{k-1} + R_{k-1}\sin\alpha - R_k \sin \beta.$$

The right upper hemispheres of $H_k$ are the Euclidean graphs of $$v_k: D_k\to \mathbb{R}, v_k(0,x,y) = t_k + \sqrt{R_k^2 - |(x,y)|^2},$$ where $D_k:=\{(0,x,y)\in \Hi^n\,|\,|(x,y)|\le R_k, y>0)\}$.

Notice that $D_0\supsetneq D_1 \supsetneq D_2 \subsetneq \dots$. A straithfoward computation gives that $v_k(P) > v_{k-1}(P)$ if and only if $$P\in W_k:=\{(0,x,y)\,|\,|(x,y)|<R_k\cos \beta = R_{k-1}\cos \alpha, y>0\},$$ that is, the upper hemispheres of $H_{k-1}$ and $H_k$ intersect exactly in the parallels of angle $\alpha$ and $\beta$, respectively, and besides $H_k$ is below $H_{k-1}$ in $D_k\setminus W_k$.

\medskip

\centerline{\fbox{\includegraphics[scale=0.17]{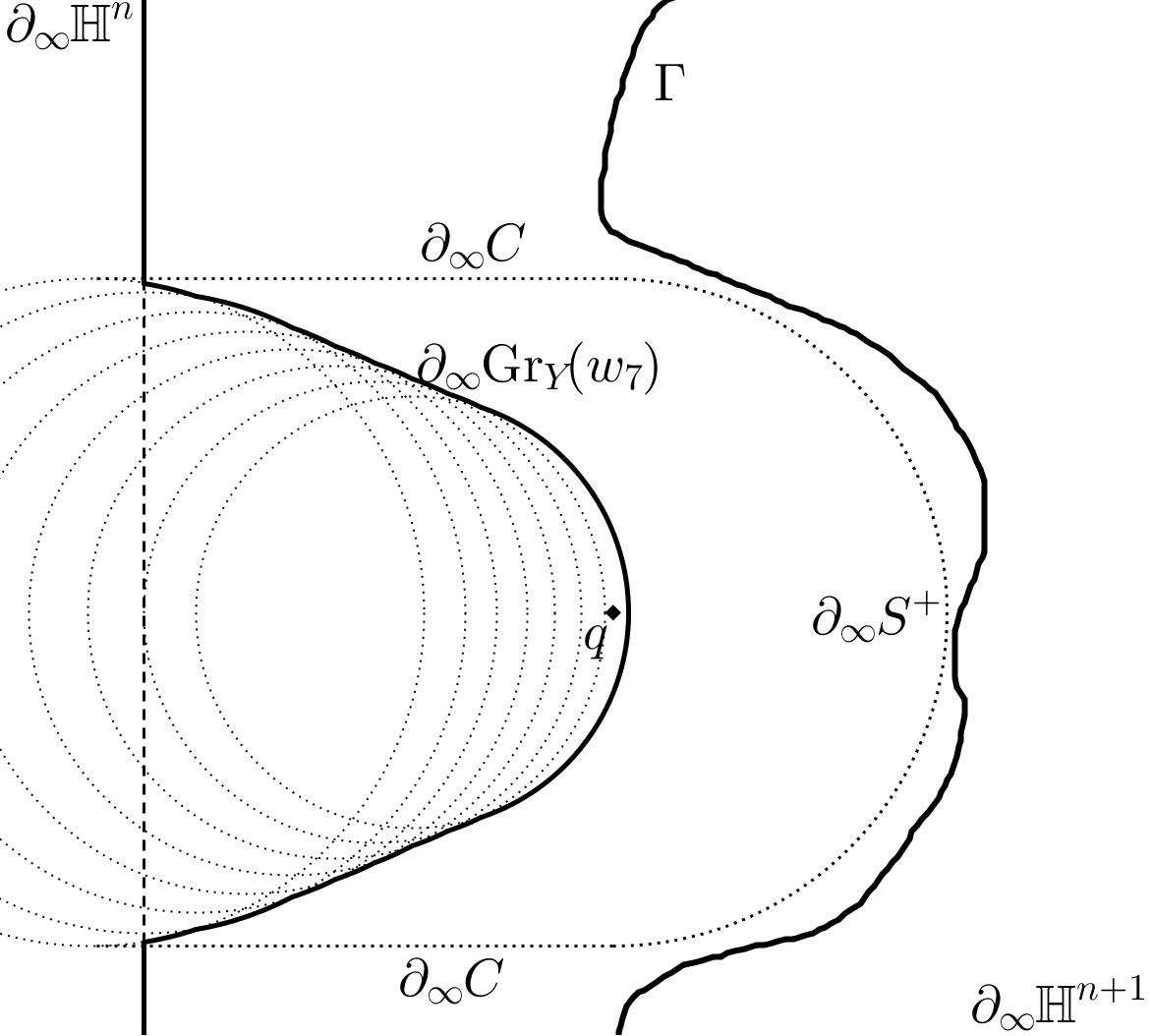}} \fbox{\includegraphics[scale=0.4]{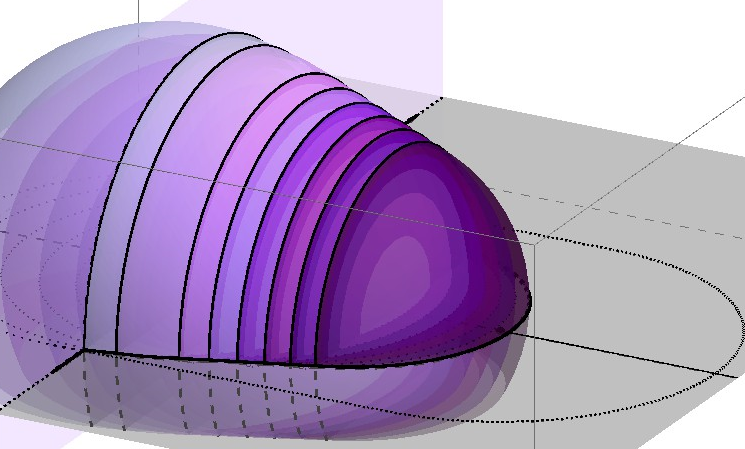}}}

\medskip

Since the Killing graph of $v_k$ is contained in $H_k$, that is obviously a minimal hypersurface of $\mathbb{H}^{n+1}$, we obtain that the following sequence of functions defined inductively is a sequence of elements of $S_{\phi}$:

$$ w_0(P) =   \left\{ \begin{array}{l} 
  \max\{v_0(P),0\}, \text{ if }P \in D_0,\\ 0,\text{ otherwise }.
\end{array}\right.$$

$$ \vdots$$

$$w_k(P) =   \left\{ \begin{array}{l} 
  \max\{v_k(P),w_{k-1}(P)\}, \text{ if }P \in D_k,\\ w_{k-1}(P),\text{ otherwise }.
\end{array}\right.$$

To finish, it suffices to prove that $\exists k \in \mathbb{N}$ such that $l+\varepsilon<t_k<l+1$. To prove it, notice that 

$$t_k = (1+R_1+\dots + R_{k-1})\sin \alpha - (1+R_1+\dots + R_k)\sin \beta=$$

$$ = \frac{1-\left(\frac{\cos \alpha}{\cos \beta}\right)^{k}}{1-\left(\frac{\cos \alpha}{\cos \beta}\right)}\sin \alpha  -\frac{1-\left(\frac{\cos \alpha}{\cos \beta}\right)^{k+1}}{1-\left(\frac{\cos \alpha}{\cos \beta}\right)}\sin \beta,$$ and since $\cos \alpha < \cos \beta$,

$$\lim_{k\to+\infty} t_k = \frac{\sin \alpha - \sin \beta}{1-\cos\alpha/\cos \beta},$$ which have been choosen to satisfy \eqref{alpha}. \end{proof}

\medskip


\medskip

We now have all necessary elements to prove the main result.

\begin{proof}[Proof of Theorem \ref{par}]
Let $\mathbb{H}^{n}$ be the a totally geodesic hypersurface such that $\partial_{\infty}\mathbb{H}^{n}=E_{1}.$ Let $Y$ be a Killing field orthogonal to $\mathbb{H}^{n}$ which orbits are horocycles orthogonal to $\mathbb{H}^{n}$ and such that $Y\left(  p\right)  =0.$ Let $F$ be a hypersphere of $\mathbb{H}^{n+1}$ such that $\partial_{\infty}F=E_{2}$ and that has constant mean curvature $\left\vert H\right\vert $ with respect to a unit normal vector field $\eta$ such that $\left\langle \eta ,Y\right\rangle \leq0.$ 

The orbits of $Y$ on $\partial_{\infty}\mathbb{H}^{n+1}\backslash\left\{  p\right\}  $ are of the form $C\backslash\left\{p\right\}  $ where $C$ is a circle passing through $p$ and orthogonal to
$E_{1}$ and $E_{2}$\ at $p.$ Therefore, from the hypothesis it follows that $\Gamma$ is the $Y-$Killing graph of a function $\phi_{Y}\in C^{0}\left(\partial_{\infty}\mathbb{H}^{n}\backslash\left\{  p\right\}  \right)  .$ Also, $F$ is the graph of a positive solution $w_{Y}\in C^{\infty}\left(
\mathbb{H}^{n}\right)  \cap C^{0}\left(  \overline{\mathbb{H}^{n}}\backslash\left\{  p\right\}  \right)  $ of $Q_{H}=0$ in $\mathbb{H}^{n}.$

Denote by $\Psi$ the flow of $Y$ (note that since $Y(p)=0$, $\Psi(p,t)=p$ for all $t\in\mathbb{R}$).
Obviously $\sigma\equiv 0$ is a subsolution. Furthermore $\sigma|_{\partial_{\infty}\mathbb{H}^{n}}\leq\phi\leq w|_{\partial_{\infty}\mathbb{H}^{n}}$ and it follows that the function $u=u_{\phi}$ defined in Theorem \ref{perron} belongs to $C^{\infty}\left(  \mathbb{H}^{n}\right)  $ and satisfies $Q_{H}\left(u\right)  =0.$ 

To finish the proof, one needs to prove that $u$ extends continuously to $\partial_{\infty}%
\mathbb{H}^{n}\backslash\left\{  p\right\}  $ and that $u|_{\partial_{\infty
}\mathbb{H}^{n}\backslash\left\{ p\right\}  }=\phi$. 

Proposition \ref{lbarrier} imply that $u_{\ab \Hi^{n}}\le \phi,$ because it implies that there are elements of $S_{\phi}$ arbitrarily close to any point of $\Gamma$.

It is also important to construct upper barriers. This step is easier than the construction of lower barriers because we do not need to construct global functions. Indeed, given $q$ in the connected component of $\ab \Hi^{n+1}\setminus \Gamma$ that does not contains $E_1$, take a CMC $H$ hypersphere  $J$ which mean curvature vector points to $\Gamma$ (if $H=0$, the orientation is irrelevant) and such that $\ab J \cap \Gamma = \emptyset$. From the Comparison Principle it follows that ${\rm{Gr}}_Y(v)\cap J=\emptyset$ for any $v\in S_{\phi}$.\end{proof}

\subsection{Theorem \ref{gs} revisited}\label{gsrevisited}

Theorem \ref{gs} may be also written in intrinsic terms and with a Killing graph point of view:

\begin{theorem}
\label{hyp}Let $p_{1},p_{2}$ be two distinct points in the asymptotic boundary $\partial_{\infty}\mathbb{H}^{n+1}$ of $\mathbb{H}^{n+1}$ and $\Gamma \subset\partial_{\infty}\mathbb{H}^{n+1}$ be a compact embedded topological hypersurface not passing neither through $p_{1}$ nor $p_{2}$ and that intersects transversely any arc of circle of $\partial_{\infty}\mathbb{H}^{n+1}$ having $p_{1}$ and $p_{2}$ as ending points. Let $\left\vert H\right\vert <1$ be given. Then there exists an unique properly embedded, complete $C^{\infty}$ hypersurface $\Sigma$ of $\mathbb{H}^{n+1}$ with CMC $H$ such that $\partial_{\infty}\Sigma=\Gamma$ and $\overline{\Sigma}=\Sigma\cup\Gamma$ is an embedded compact topological hypersurface of $\overline{\mathbb{H}}^{n+1}.$

Moreover there exist a totally geodesic hypersurface $\mathbb{H}^{n}$ of $\mathbb{H}^{n+1}$, a Killing field $X$ which orbits are hypercycles orthogonal to $\mathbb{H}^{n}$ and a function $u\in C^{\infty}\left(\mathbb{H}^{n}\right)  \cap C^{0}\left(  \overline{\mathbb{H}}^{n}\right)  $ such that $\overline{\Sigma}={\rm{Gr}}_{X}(u)$.
\end{theorem}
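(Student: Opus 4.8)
The plan is to follow the scheme of the proof of Theorem \ref{par} closely, replacing the parabolic Killing field by a hyperbolic one and taking advantage of the fact that, since $\Gamma$ now avoids \emph{both} $p_{1}$ and $p_{2}$, the resulting boundary datum is continuous on the whole asymptotic sphere $\ab\Hi^{n}$, so there is no puncture to deal with. First I would fix the geodesic $\gamma$ of $\Hi^{n+1}$ with $\ab\gamma=\{p_{1},p_{2}\}$, let $X$ be the hyperbolic Killing field generating the translations along $\gamma$, and let $\Hi^{n}$ be the totally geodesic hypersurface orthogonal to $\gamma$ (equivalently, an integral leaf of the distribution orthogonal to $X$, whose existence is guaranteed by the discussion of Section \ref{killing}; its orbits are the hypercycles equidistant to $\gamma$, meeting $\Hi^{n}$ orthogonally). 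Then $\ab\Hi^{n}$ is a sphere separating $p_{1}$ from $p_{2}$, and the orbits of $X$ in $\ab\Hi^{n+1}\setminus\{p_{1},p_{2}\}$ are exactly the arcs of circle joining $p_{1}$ to $p_{2}$, each meeting $\ab\Hi^{n}$ once. The transversality hypothesis says that each such orbit meets $\Gamma$ at exactly one point, so projecting $\Gamma$ along the orbits onto $\ab\Hi^{n}$ exhibits $\Gamma$ as the $X$-Killing graph of a function $\phi\in C^{0}(\ab\Hi^{n})$, now continuous on the entire compact sphere $\ab\Hi^{n}$.

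Next I would set up Perron's method (Theorem \ref{perron}). Because $|H|<1$, for each constant $c$ there is a CMC $H$ hypersphere whose asymptotic boundary is the sphere obtained by flowing $\ab\Hi^{n}$ for time $c$ along $X$; as with the supersolution $F$ used in the proof of Theorem \ref{par}, such a hypersphere is the $X$-Killing graph of a function $w_{c}\in C^{\infty}(\Hi^{n})\cap C^{0}(\overline{\Hi}^{n})$ with $w_{c}|_{\ab\Hi^{n}}\equiv c$, and it is an exact solution of $Q_{H}=0$, hence simultaneously a sub- and a supersolution. Since $\phi$ is continuous on the compact set $\ab\Hi^{n}$ it is bounded, so choosing $c^{-}<\min_{\ab\Hi^{n}}\phi\le\max_{\ab\Hi^{n}}\phi<c^{+}$ provides a subsolution $w_{c^{-}}$ and a supersolution $w_{c^{+}}$ with $w_{c^{-}}|_{\ab\Hi^{n}}\le\phi\le w_{c^{+}}|_{\ab\Hi^{n}}$. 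Theorem \ref{perron} then yields a function $u=u_{\phi}\in C^{\infty}(\Hi^{n})$ with $Q_{H}(u)=0$, whose $X$-Killing graph is the candidate hypersurface $\Sigma$.

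It remains to show that $u$ extends continuously to $\overline{\Hi}^{n}$ with $u|_{\ab\Hi^{n}}=\phi$, and here the two-point (nondegenerate) geometry makes the argument cleaner than in the parabolic case. For the lower barriers I would use the analogue of Proposition \ref{lbarrier}: given a point $q$ in the component of $\ab\Hi^{n+1}\setminus\Gamma$ containing $\ab\Hi^{n}$, a single CMC $H$ hypersphere whose asymptotic boundary is a sphere separating $q$ from $\Gamma$ and lying arbitrarily close to $\Gamma$ produces an element of $S_{\phi}$ whose asymptotic boundary separates $q$ from $\Gamma$—no inductive ``stacking'' is needed, precisely because $\Gamma$ stays away from $p_{1}$ and $p_{2}$. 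For the upper barriers I would argue exactly as in the proof of Theorem \ref{par}: for $q$ on the other side of $\Gamma$, a CMC $H$ hypersphere $J$ with $\ab J\cap\Gamma=\emptyset$ whose mean curvature vector points towards $\Gamma$ confines every graph ${\rm Gr}_{X}(v)$, $v\in S_{\phi}$, to one side of $J$ by the comparison principle (Lemma \ref{comp}). Together these barriers force $u|_{\ab\Hi^{n}}=\phi$, so that $\overline{\Sigma}={\rm Gr}_{X}(u)$ is the desired complete, properly embedded $C^{\infty}$ hypersurface with $\ab\Sigma=\Gamma$ and $\overline{\Sigma}$ a compact embedded topological hypersurface of $\overline{\Hi}^{n+1}$; uniqueness follows from the comparison principle (Lemma \ref{comp}) applied to two such solutions sharing the asymptotic boundary $\Gamma$. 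The only genuinely delicate point is the continuous attainment of the boundary data—that is, that the asymptotic boundaries of the barrier hyperspheres can be pushed onto $\Gamma$ while the hyperspheres remain global $X$-graphs—but this is the direct and simpler analogue of Proposition \ref{lbarrier}, and that is where I expect the main work to lie.
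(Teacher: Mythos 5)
Your overall strategy coincides with the paper's: take the geodesic $\gamma$ with $\ab\gamma=\{p_1,p_2\}$, the hyperbolic Killing field $X$, the totally geodesic $\Hi^n$ orthogonal to $\gamma$, write $\Gamma$ as the $X$-graph of $\phi\in C^0(\ab\Hi^n)$, run Perron's method with hyperspheres as global sub/supersolutions, and use CMC $H$ hyperspheres disjoint from $\Gamma$ as upper barriers together with the comparison principle. (The paper takes $\sigma\equiv 0$ and one CMC $|H|$ hypersphere over a sphere enclosing $\Gamma$ rather than your two constant-boundary hyperspheres $w_{c^\pm}$; both are fine.)

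The genuine gap is in your lower barriers. You assert that a single CMC $H$ hypersphere whose asymptotic boundary separates $q$ from $\Gamma$ and lies close to $\Gamma$ ``produces an element of $S_\phi$,'' with ``no inductive stacking needed.'' But an element of $S_\phi$ must be a continuous subsolution on all of $\overline{\Hi}^n$, and a hypersphere is a \emph{global} $X$-graph over $\Hi^n$ only when its asymptotic boundary is a sphere separating $p_1$ from $p_2$ (in the half-space model with $p_1=0$, $p_2=\infty$: only when its trace at infinity encloses the origin, so that every ray from the origin meets it exactly once). For a general star-shaped $\Gamma$ (say with a deep narrow radial dent), no sphere enclosing the origin and contained in the inner component of $\ab\Hi^{n+1}\setminus\Gamma$ comes close to the point of $\Gamma$ at the bottom of the dent; and a small sphere around $q$ that does separate $q$ from $\Gamma$ bounds a hypersphere that is not a global $X$-graph, and whose ``outer sheet'' cannot be patched continuously with a constant subsolution (the two functions disagree along the tangency locus). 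This is exactly the obstruction that the stacking in Proposition \ref{lbarrier} is designed to overcome, and the paper's own sketch indicates (via the figure) that the hyperbolic case uses the analogous stacking of hyperspheres along the ray toward $q$; the fact that $\Gamma$ avoids $p_1,p_2$ removes the puncture in $\phi$, not the need for stacking. To repair your argument you should carry out the analogue of Proposition \ref{lbarrier}: an increasing envelope of totally geodesic (or CMC) hyperspheres centered on the ray from $p_1$ through $q$, with radii chosen so that consecutive ones overlap along fixed parallels, each step being the max of two subsolutions, whose union advances past $q$ while its asymptotic boundary stays on the $E_1$ side of $\Gamma$.
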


We now present the sketch of a proof of Theorem \ref{hyp} using the same techniques of Theorem \ref{par}.

First of all, let $\gamma$ be the geodesic of $\mathbb{H}^{n+1}$ such that $\partial_{\infty}\gamma=\left\{  p_{1},p_{2}\right\}  ,$ oriented from $p_{1}$ to $p_{2}.$ Let $X$ be the Killing field which orbits are equidistant hypercycles of $\gamma$ and assume that $X|_{\gamma}$ induces the same orientation of $\gamma.$  Let $\mathbb{H}^{n}$ be a totally geodesic hypersurface orthogonal to $\gamma$. 

To apply Perron's method in $\Hi^n$ it suffices to prove that there are sub and supersolutions $\sigma \le w$ of \eqref{pde} (with $Z=X$, see Section \ref{killing}) such that $\sigma_{\ab \Hi^n} \le \phi \le w_{\ab \Hi^n}.$
The subsolution is $\sigma\equiv 0$ (that is in fact a minimal hypersurface). To construct the supersolution, it is useful to work in the half-space model again: we take $p_{1}=0,$ $p_{2}=\left\{  x_{n+1}=\infty\right\}  $ and $\mathbb{H}^{n}$ as the half-sphere centered at $0$ with Euclidean radius $1.$ Then, since $\Gamma\subset\partial_{\infty}\mathbb{H}$ is compact and $p_{2}\notin\Gamma$ it follows that $\Gamma$ is compact in $\mathbb{R}^{n}:=\left\{  x_{n+1}=0\right\}  .$ Now, take a sphere $E$ of $\mathbb{R}^{n}$ centered at the origin $0$, containing $\Gamma$ in its interior, and choose $F$ as the hypersphere of $\mathbb{H}^{n+1}$ with CMC $\left\vert H\right\vert $ with respect to the unit normal vector field pointing to $0$ and having $E$ as asymptotic boundary. $F$ is the Killing graph of the desired function $w$.

To finish the proof, one needs to find barriers. The upper barriers are given by CMC $H$ hyperspheres as in the previous case. As to the lower barriers, the following picture illustrates the construction in this case:

\medskip

\centerline{\fbox{\includegraphics[scale=0.35]{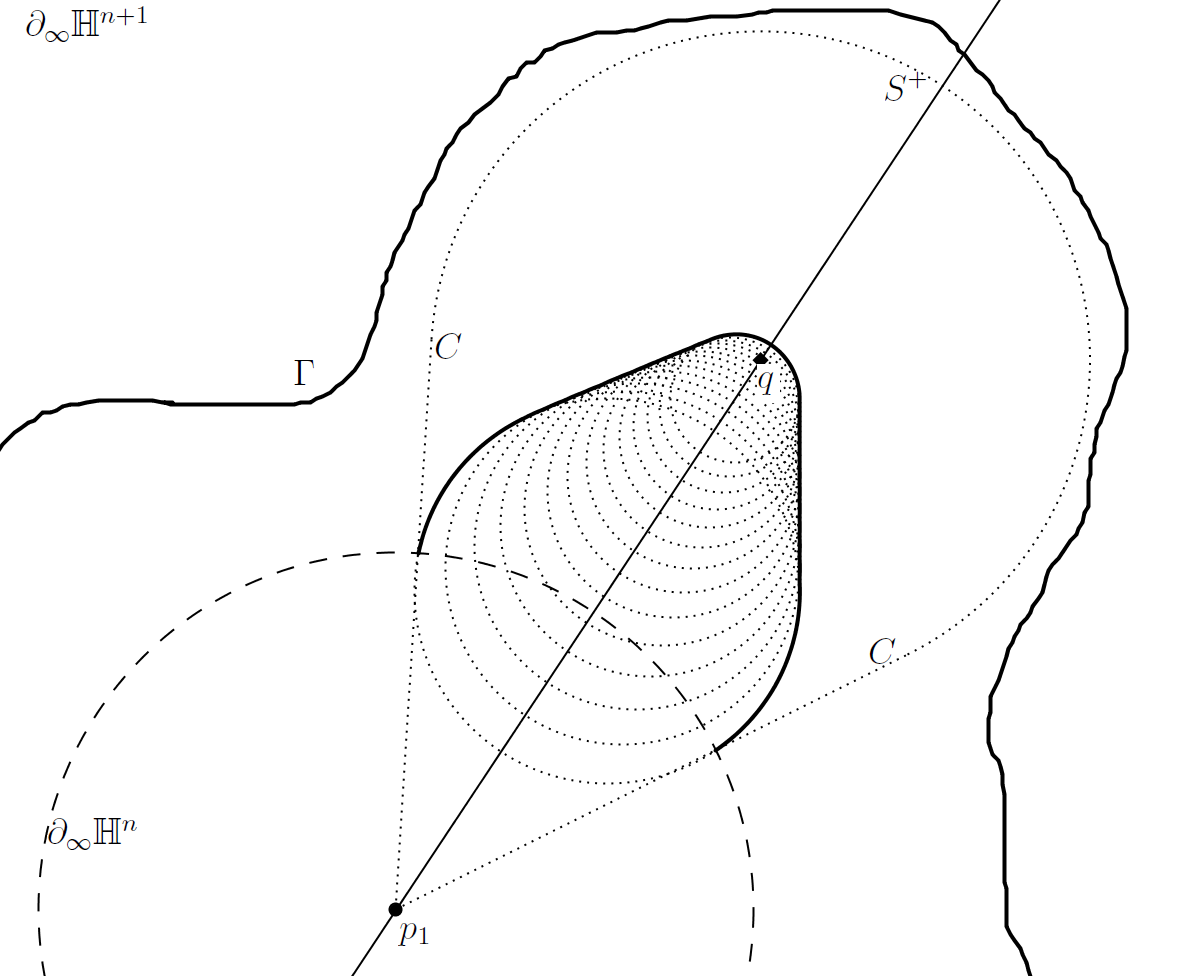}}}

\centerline{
\begin{tabular}{ccc}
{\small Jaime B. Ripoll} & $\hspace{1.5cm}$& {\small Miriam Telichevesky} \\
{\small Univ. Federal do Rio Grande do Sul}  &  & {\small Univ. Federal do Rio Grande do Sul}\\
{\small Instituto de Matem\'atica} &
& {\small Instituto de Matem\'atica}\\
{\small Av. Bento Gon\c calves 9500} & & {\small Av. Bento Gon\c calves 9500}\\
{\small 91540-000 Porto Alegre-RS } & &{\small 91540-000 Porto Alegre-RS }\\
{\small  BRASIL} &  & {\small BRASIL} \\
{\small jaime.ripoll@ufrgs.br}& &{\small miriam.telichevesky@ufrgs.br} \\
\end{tabular}}

\end{document}